\newtheorem{theorem}{Theorem}[section]
\newtheorem{lemma}[theorem]{Lemma}
\newtheorem{proposition}[theorem]{Proposition}
\newtheorem{question}[theorem]{Question}
\newtheorem{example}[theorem]{Example}
\numberwithin{equation}{section}
\begin{document}

\baselineskip=17pt

\title{On weakly Gibson $F_\sigma$-measurable mappings}

\author{Olena Karlova\\
Chernivtsi National University,\\
                    Department of Mathematical Analysis,\\
                    58012 Chernivtsi, Ukraine\\
E-mail: maslenizza.ua@gmail.com
\and
Volodymyr Mykhaylyuk\\
Chernivtsi National University,\\
                    Department of Mathematical Analysis,\\
                    58012 Chernivtsi, Ukraine\\
E-mail: vmykhaylyuk@ukr.net}

\date{}

\maketitle


\renewcommand{\thefootnote}{}

\footnote{2010 \emph{Mathematics Subject Classification}: Primary 26B05,  54C08; Secondary 26A21.}

\footnote{\emph{Key words and phrases}:   weakly Gibson function, $F_\sigma$-measurable function, connected graph.}

\renewcommand{\thefootnote}{\arabic{footnote}}
\setcounter{footnote}{0}

\begin{abstract}
   A function $f:X\to Y$ between topological spaces is said to be a {\it weakly Gibson function} if $f(\overline{U})\subseteq \overline{f(U)}$ for any open connected set \mbox{$U\subseteq X$}. We prove that if $X$ is a locally connected hereditarily Baire space and $Y$ is a $T_1$-space then an $F_\sigma$-measurable mapping $f:X\to Y$ is weakly Gibson if and only if for any connected set $C\subseteq X$ with the dense connected interior   
    the image $f(C)$ is connected. Moreover, we show that each weakly Gibson $F_\sigma$-measurable mapping $f:\mathbb R^n\to Y$, where $Y$ is a $T_1$-space, has a connected graph.
\end{abstract}

\section{Introduction}

The classical theorem of Kuratowski and Sierpi\'{n}ski \cite{KuSerp} states that any Darboux Baire-one function $f:\mathbb R\to\mathbb R$ has a connected graph.

In 2010 K.~Kellum~\cite{Kellum} introduced Gibson and weak Gibson properties for a mapping $f$ between topological spaces $X$ and $Y$. He calls $f$ (weakly) Gibson if $f(\overline{U})\subseteq \overline{f(U)}$ for an arbitrary open (and connected) set $U\subseteq X$. Since every Darboux function has the weak Gibson property~\cite{KaMykh}, it is naturally to ask whether the theorem of Kuratiwski--Sierpi\'{n}ski remains valid if we replace the Darboux property by the weak Gibson property? It was shown in \cite{KaMykh} that any weakly Gibson barely continuous mapping (in the sense that for each non-empty closed subspace $F\subseteq X$ the restriction $f|_F$ has a continuity point) defined on a connected and locally connected space $X$ and with values in a topological space  $Y$ has a connected graph. It is find out that the condition of barely continuity in the above mentioned result from \cite{KaMykh} is not necessary  (see Example~\ref{multivalued}).

In this paper we consider weakly Gibson mappings $f:X\to Y$ which are $F_\sigma$-measurable, i.e. the preimage $f^{-1}(V)$ of an open set $V\subseteq Y$ is an $F_\sigma$-set in $X$. Note that in the case when $Y$ is a perfectly normal space, every Baire-one mapping $f:X\to Y$ is  $F_\sigma$-measurable (see for instance \cite[p.~394]{Ku1}). In Section~\ref{sec:sets} we introduce the notions of $\mathcal G$-closed and $\mathcal W$-closed sets and prove that the Euclidean space $\mathbb R^n$ cannot be written as a union of two non-empty disjoint $F_\sigma$ and $G_\delta$ $\mathcal W$-closed subsets as well as a connected and locally connected hereditarily Baire space cannot be written as a union of two non-empty disjoint $F_\sigma$ and $G_\delta$ $\mathcal G$-closed subsets. Using these facts in Section~\ref{sec:applications} we prove that each $F_\sigma$-measurable mapping $f$ between a locally connected hereditarily Baire space $X$ and a $T_1$-space $Y$ is weakly Gibson if and only if for any connected set $C\subseteq X$ with the dense connected interior the image $f(C)$ is connected. This generalizes the result of M. Evans and P. Humke  \cite{EvansHumke} who proved the similar theorem for $X=\mathbb R^n$ and $Y=\mathbb R$.
We prove also that each weakly Gibson $F_\sigma$-measurable mapping $f:\mathbb R^n\to Y$, where $Y$ is a $T_1$-space, has a connected graph.

\section{$\mathcal A$-closed sets and their properties}\label{sec:sets}
Let $X$ be a topological space and let
\begin{itemize}
  \item $\mathcal T(X)$ be the system of all open subsets of $X$,

  \item $\mathcal C(X)$ be the system of all connected subsets of $X$,

  \item $\mathcal G(X)$ be the system of all connected open subsets of $X$,

  \item $\mathcal W(X)$ be the system of all open convex subsets of a topological vector space $X$.
\end{itemize}

Let $\mathcal A(X)$ be a system of subsets of $X$. A subset $E\subseteq X$ is called {\it closed with respect to $\mathcal A(X)$} or, briefly, {\it $\mathcal A$-closed} if for any $A\in \mathcal A(X)$ with $A\subseteq E$ we have $\overline{A}\subseteq E$.

\begin{proposition}
  Let $X$ be a connected and locally connected space and $U$ be an open $\mathcal G$-closed subset of $X$. Then $U=\emptyset$ or $U=X$.
\end{proposition}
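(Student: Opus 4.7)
The plan is to show that, under the hypotheses, $U$ is clopen in $X$, whence the desired dichotomy follows from the connectedness of $X$. Since $U$ is already open by assumption, the whole task reduces to proving that $U$ is closed in $X$.

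The first step is to exploit local connectedness of $X$ in order to decompose $U$ into its connected components. In a locally connected space every connected component of an open set is itself open; hence every component $C$ of $U$ is open and connected, i.e.\ $C \in \mathcal{G}(X)$.

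Now comes the step where the hypothesis on $U$ is used. For such a component $C$ we have $C \in \mathcal{G}(X)$ and $C \subseteq U$, so $\mathcal{G}$-closedness of $U$ yields $\overline{C} \subseteq U$. Since $\overline{C}$ is connected (as the closure of a connected set) and contains $C$, the maximality of $C$ as a component of $U$ forces $\overline{C} \subseteq C$, so $C$ is closed in $X$. Thus each component of $U$ is clopen in $X$.

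Finally, if $U \neq \emptyset$, pick any component $C$ of $U$; then $C$ is a non-empty clopen subset of the connected space $X$, so $C = X$ and therefore $U = X$. The only step that is not purely formal is the passage from local connectedness of $X$ to openness of the components of $U$, which allows the $\mathcal{G}$-closedness assumption to apply; no real obstacle is expected beyond this standard observation.
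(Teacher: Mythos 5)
Your proof is correct and follows essentially the same route as the paper: components of the open set $U$ are open by local connectedness, $\mathcal{G}$-closedness gives $\overline{C}\subseteq U$, maximality of the component forces $\overline{C}=C$, and connectedness of $X$ finishes the argument. No issues.
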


\begin{proof}
  Consider a component $C$ of $U$. The locally connectedness of $U$ implies that $C$ is clopen in $U$, consequently, $C$ is open in $X$. Since $U$ is $\mathcal G$-closed, $\overline{C}\subseteq U$. Therefore, $\overline{C}=C$ provided $C$ is a component. Hence, $C$ is clopen in a connected space $X$. Therefore, $C=\emptyset$ or $C=X$. Since  $U$ is a union of all components,  $U=\emptyset$ or $U=X$.
\end{proof}

We need the following auxiliary fact.

\begin{lemma}\label{lemma:3:10}{\rm {\cite[p.~136]{Ku1}}}
  Let $A$ and $B$ be subsets of a topological space $X$ such that $A$ is connected and  $A\cap B\ne\emptyset\ne A\setminus B$. Then $A\cap {\rm fr B}\ne\emptyset$.
\end{lemma}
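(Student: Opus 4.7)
The plan is to argue by contradiction: assume $A \cap {\rm fr}\,B = \emptyset$, and derive a disconnection of $A$ into two nonempty relatively open pieces, contradicting the assumed connectedness.

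First I would decompose the ambient space as the disjoint union $X = {\rm int}\,B \,\cup\, {\rm fr}\,B \,\cup\, {\rm int}(X \setminus B)$. Intersecting with $A$ and using $A \cap {\rm fr}\,B = \emptyset$ yields $A = U \cup V$, where $U = A \cap {\rm int}\,B$ and $V = A \cap {\rm int}(X \setminus B)$. These two sets are disjoint and relatively open in $A$, being traces on $A$ of open subsets of $X$.

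Next I would verify that both $U$ and $V$ are nonempty, which is where the two hypotheses $A \cap B \neq \emptyset$ and $A \setminus B \neq \emptyset$ enter. Any $x \in A \cap B$ lies in $B \subseteq {\rm int}\,B \cup {\rm fr}\,B$; since $x \notin {\rm fr}\,B$, we get $x \in U$. Symmetrically, any $y \in A \setminus B$ lies in $X \setminus B \subseteq {\rm int}(X \setminus B) \cup {\rm fr}(X \setminus B)$, and since ${\rm fr}(X \setminus B) = {\rm fr}\,B$ is disjoint from $A$, one gets $y \in V$. This exhibits the connected set $A$ as a disjoint union of two nonempty relatively open subsets, a contradiction.

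The argument is short and routine, so I do not anticipate any genuine obstacle. The only mild subtlety worth flagging is the identity ${\rm fr}(X \setminus B) = {\rm fr}\,B$, which is precisely what allows a point of $X\setminus B$ to be placed into ${\rm int}(X\setminus B)$ once the frontier has been ruled out.
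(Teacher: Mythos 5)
Your proof is correct and complete: the decomposition $X={\rm int}\,B\cup{\rm fr}\,B\cup{\rm int}(X\setminus B)$, the identity ${\rm fr}(X\setminus B)={\rm fr}\,B$, and the resulting disconnection of $A$ are all handled properly. The paper itself gives no proof of this lemma (it is quoted from Kuratowski's \emph{Topology} with a page reference), and your argument is precisely the standard one found there, so there is nothing to compare beyond noting agreement.
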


For a point $x_0$ of a normed space $X$ and for $\varepsilon>0$ by $B(x_0,\varepsilon)$ /$B[x_0,\varepsilon]$/ we denote an open /closed/ ball with the center at $x_0$ and radius $\varepsilon$.

If a  subset of a topological space is simultaneously $F_\sigma$ and $G_\delta$, then it is said to be {\it ambiguous}.

\begin{theorem}\label{th:main}
  Let $X$ be a hereditarily Baire space, $X_1$ and $X_2$ are ambiguous disjoint $\mathcal A$-closed subsets of $X$ such that $X=X_1\cup X_2$. If
\begin{enumerate}
\item $X$ is a connected and locally connected space and $\mathcal A(X)=\mathcal G(X)$,  or

\item $X=\mathbb R^n$, $n\ge 1$, and $\mathcal A(X)=\mathcal W(X)$,
\end{enumerate}
then $X_1=X$ or $X_2=X$.
\end{theorem}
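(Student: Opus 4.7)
The plan is to argue by contradiction, assuming both $X_1$ and $X_2$ are nonempty, and to produce an element $U\in\mathcal A(X)$ contained in one of the $X_i$ whose closure meets the other, contradicting $\mathcal A$-closedness. Since $X$ is connected in both cases, the common frontier $F:=\operatorname{fr}X_1=\operatorname{fr}X_2$ is nonempty (otherwise $X_1$ would be clopen in a connected $X$). The set $F$ is closed in the hereditarily Baire space $X$, hence $F$ is Baire. Writing $X_i=\bigcup_n A_{i,n}$ with $A_{i,n}$ closed in $X$, the decomposition $F=\bigcup_n(F\cap A_{1,n})\cup\bigcup_n(F\cap A_{2,n})$ together with the Baire category theorem yields an open set $V\subseteq X$ and an index $i$ for which $\emptyset\ne V\cap F\subseteq X_i$; after relabelling I assume $i=1$.

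The key observation I would record next is that $V\cap X_2$ is open in $X$: a point $x\in V\cap X_2$ cannot lie in $F$ (otherwise $x\in V\cap F\subseteq X_1$), so $x\in\operatorname{int}X_1\cup\operatorname{int}X_2$, and disjointness forces $x\in\operatorname{int}X_2$. Moreover any $x_0\in V\cap F$ lies in $\overline{X_2}$, so $V$ meets $X_2$; and $V\cap F\subseteq X_1$ is nonempty, so $V$ meets $X_1$ as well.

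In case (1), I would shrink $V$ to a connected open neighborhood of some $x_0\in V\cap F$ using local connectedness, and take any connected component $U$ of the open set $V\cap X_2$; local connectedness makes $U$ open in $V$. Assume for contradiction that $\overline{U}\subseteq X_2$; then $\overline{U}\cap V\subseteq V\cap X_2$, and the standard observation that a connected open neighborhood of a point $x\in\overline{U}\cap V$ contained in its component of $V\cap X_2$ must meet $U$ shows that component equals $U$, whence $\overline{U}\cap V=U$. So $U$ is clopen in the connected set $V$, forcing $U=V\subseteq X_2$, contradicting $V\cap X_1\ne\emptyset$. Therefore $\overline{U}$ meets $X_1$, contradicting the $\mathcal G$-closedness of $X_2$.

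In case (2), I would refine $V$ to an open ball $B(x_0,r)$ centered at $x_0\in V\cap F\cap X_1$ with $B(x_0,r)\cap F\subseteq X_1$, choose $y\in X_2\cap B(x_0,r/2)$ (which exists because $x_0\in\overline{X_2}$), and set $\rho:=\operatorname{dist}(y,\overline{X_1})$. Since $y\notin F$ we have $\rho>0$, and $\rho\le|y-x_0|<r/2$; compactness produces $x^{*}\in\overline{X_1}$ with $|y-x^{*}|=\rho$, and the triangle inequality places $x^{*}$ in $B(x_0,r)$. Points of $B(y,\rho)\subseteq X_2$ accumulate at $x^{*}$, so $x^{*}\in V\cap F\subseteq X_1$, and then $U:=B(y,\rho)$ is an open convex subset of $X_2$ whose closure contains $x^{*}\in X_1$, contradicting $\mathcal W$-closedness of $X_2$. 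The main difficulty I anticipate is the Baire-category step producing the open set $V$ with $V\cap F\subseteq X_1$; once $V$ is in hand, the component argument in case (1) and the maximal-ball construction in case (2) are short geometric exercises.
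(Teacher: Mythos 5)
Your proof is correct and follows the same overall architecture as the paper's: argue by contradiction, pass to the common frontier $F=\overline{X_1}\cap\overline{X_2}$ (nonempty by connectedness), exploit the Baire property of the closed set $F$, and derive a contradiction from a relatively open piece of $F$ lying inside one of the $X_i$. The two arguments differ in organization and in the local geometric step. The paper runs the geometric argument first, twice, to show that each $X_i\cap F$ is dense in $F$, and only then invokes Baire category in the form ``a Baire space has no two disjoint dense $G_\delta$ subsets,'' thus using the $G_\delta$ half of ambiguity; you invoke Baire category first, on the closed pieces $F\cap A_{i,n}$ of the $F_\sigma$ decompositions, to produce the open $V$ with $\emptyset\ne V\cap F\subseteq X_i$, and then run the geometric argument once, using the $F_\sigma$ half (the two are equivalent here since $X_1$ and $X_2$ are complementary). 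In case (1) the paper takes a component $G$ of $X\setminus\overline{X_2}$ inside $X_1$ and applies the frontier lemma (Lemma~\ref{lemma:3:10}) to land a point of ${\rm fr\,}G\subseteq F$ in $U$, whereas you take a component of $V\cap X_2$ and show it would otherwise be clopen in the connected $V$; in case (2) the paper inflates a ball about a point of $X_1$ via $R=\sup\{r:B(a,r)\subseteq X_1\}$ until it touches $\overline{X_2}$, while you deflate via $\rho=\operatorname{dist}(y,\overline{X_1})$ from a point of $X_2$ --- mirror images of the same idea. Your arrangement has the mild advantage of needing the geometric construction only once and of not having to check that $X_i\cap F$ is $G_\delta$ in $F$; the paper's density formulation is what generalizes most directly to its symmetric conclusion. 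All the individual steps you sketch (openness of $V\cap X_2$ off $F$, attainment of the distance by compactness, $x^{*}\in F\cap B(x_0,r)$) check out.
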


\begin{proof}
To obtain a contradiction, suppose that $X_1\ne X$ and $X_2\ne X$.  Let $F=\overline{X}_1\cap\overline{X}_2$. Since $X$ is connected, $F\ne\emptyset$. We show that $X_1\cap F$ is dense in $F$. Conversely, choose a point $x_0\in F$ and an open neighborhood $U$ of $x_0$ in  $X$ such that
$$U\cap F\subseteq X_2.$$
Then $x_0\in \overline{X}_1\cap X_2$.

1). Since $X$ is locally connected, we may assume that $U$ is connected. Note that $U\cap X_1\ne\emptyset$ and take $a\in U\cap X_1$. Then $a\not\in \overline{X}_2$. Let $G$ be a component of $X\setminus \overline{X}_2$ which contains $a$. Then $G$ is open in $X$. Remark that $U\cap G\ne\emptyset\ne U\setminus G$. Lemma~\ref{lemma:3:10} implies that $U\cap {\rm fr}G\ne\emptyset$. Since $G$ is closed in $X\setminus \overline{X}_2$,  ${\rm fr}G\subseteq \overline{X}_2$. Moreover, $G\subseteq X_1$. Therefore, ${\rm fr}G\subseteq F$. Choose $b\in U\cap {\rm fr} G$. Then $b\in X_2$. Since  $X_1$ is $\mathcal G$-closed, $b\in \overline{G}\subseteq X_1$, which is impossible.

2). We may suppose that $U=B(x_0,\varepsilon)$. Take an arbitrary $a\in B(x_0,{\varepsilon/2})\cap X_1$. Let
$$
R=\sup\{r: B(a,r)\subseteq X_1\}.
$$
Note that $R\le\varepsilon/2$, since $x_0\in X_2$.  We have
$$
d(x,x_0)\le d(x,a)+d(a,x_0)< R+\varepsilon/2<\varepsilon/2+\varepsilon/2=\varepsilon
$$
for all $x\in B[a,R]$. Hence, $ B[a,R]\subseteq  U$.
It is not hard to verify that $B[a,R]\cap \overline X_2\ne\emptyset$, provided $B[a,R]$ is compact. Therefore, there is $b\in B[a,R]\cap \overline X_2$. Since $B(a,R)$ is open and convex and $X_1$ is $\mathcal W$-closed, $b\in X_1$. But $b\in U\cap F$, which implies that $b\in X_2$. Thus, $b\in X_1\cap X_2$ which is impossible.

Hence,  $X_1\cap F$ is dense in $F$. It can be proved similarly that $X_2\cap F$ is dense in $F$. Then $X_1\cap F$ and $X_2\cap F$ are disjoint dense $G_\delta$-subsets of a Baire space $F$, which implies a contradiction. Therefore, $X_1=X$ or $X_2=X$.
\end{proof}

\section{Applications of $\mathcal A$-closed sets}\label{sec:applications}

We say that a mapping $f:X\to Y$ has  {\it a Gibson property with respect to a system $\mathcal A(X)$}, or {\it $f$ is an $\mathcal A$-Gibson} if for any  $A\in \mathcal A(X)$ we have
$$
f(\overline{A})\subseteq\overline{f(A)}.
$$
It $\mathcal A(X)=\mathcal T(X)$ then  $f$ is said to be {\it a Gibson mapping}, and if $\mathcal A(X)=\mathcal G(X)$ then $f$ {\it is a weakly Gibson mapping} (see \cite{Kellum}).

A mapping $f:X\to Y$ is {\it strongly Gibson with respect to a system  $\mathcal A(X)$}, or {\it $f$ is strongly $\mathcal A$-Gibson} if for any $x\in X$ and $A\in\mathcal A(X)$ such that $x\in\overline A$ we have
$$
f(x)\in\overline{f(A\cap U)}
$$
for an arbitrary neighborhood $U$ of  $x$ in $X$.

\begin{theorem}\label{th:strong}
  Let $X$ be a topological space, $Y$ a $T_1$-space and let $f:X\to Y$ be a mapping such that for any connected set $C\subseteq X$  with the dense connected interior the set $f(C)$ is connected.  Then $f$ is a weakly Gibson mapping.

  If, moreover, $X$ is a locally convex space then $f$ has the strong Gibson property with respect to the system $\mathcal W(X)$.
\end{theorem}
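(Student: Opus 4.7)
My plan is to reduce both assertions to a single construction: attach the limit point to the open set under consideration, verify that the resulting set is connected with dense connected interior, apply the hypothesis, and then extract the target containment using only the $T_1$ property of $Y$.

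For the weakly Gibson conclusion, I would fix an open connected $U\subseteq X$ and a point $x\in\overline U$ (the case $x\in U$ being trivial) and set $C=U\cup\{x\}$. Since $U\subseteq C\subseteq\overline U$ with $U$ connected, $C$ is connected; moreover $U\subseteq\mathrm{int}(C)\subseteq\overline U$, so $\mathrm{int}(C)$ is sandwiched between the connected set $U$ and its closure, which forces it to be connected, and it is dense in $C$ because $U$ already is. The hypothesis then gives that $f(C)=f(U)\cup\{f(x)\}$ is connected in $Y$. To conclude $f(x)\in\overline{f(U)}$, assume for contradiction that $f(x)\notin f(U)$ and $f(x)\notin\overline{f(U)}$: then some open $V\ni f(x)$ misses $f(U)$, and because $Y$ is $T_1$ the set $Y\setminus\{f(x)\}$ is also open; these two opens partition $f(C)$ into non-empty pieces $\{f(x)\}$ and $f(U)$, contradicting connectedness.

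For the strong $\mathcal W$-Gibson conclusion, given $x\in X$, an open convex $W$ with $x\in\overline W$, and any neighborhood $U$ of $x$, local convexity of $X$ lets me choose an open convex $V\subseteq U$ with $x\in V$. Then $W\cap V$ is open and convex, hence open and connected, and $x\in\overline{W\cap V}$ because every open neighborhood of $x$ contained in $V$ still meets $W$. Running the previous paragraph's argument verbatim with $W\cap V$ in place of $U$ yields $f(x)\in\overline{f(W\cap V)}\subseteq\overline{f(W\cap U)}$, which is exactly the strong $\mathcal W$-Gibson property demanded.

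The only delicate step is the sandwich observation $U\subseteq\mathrm{int}(C)\subseteq\overline U$ together with its consequence that $\mathrm{int}(C)$ is both connected and dense in $C$; everything else is a two-line separation argument using only that $Y$ is $T_1$. In particular, no appeal to Theorem~\ref{th:main} or to any Baire/measurability hypothesis is needed, which is consistent with the fact that the statement itself imposes neither on $X$.
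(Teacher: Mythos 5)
Your argument is correct and follows the paper's own route exactly: form $C=U\cup\{x_0\}$, check it is connected with dense connected interior, apply the hypothesis, and use the $T_1$ separation $V$ versus $Y\setminus\{f(x_0)\}$; for the strong $\mathcal W$-Gibson part you likewise intersect the given convex set with a convex neighborhood, just as the paper does with $U=W\cap G$. You merely make explicit two details the paper leaves implicit (that $\mathrm{int}(C)$ is connected and dense in $C$, and that $x_0$ lies in the closure of the intersection), which is fine.
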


\begin{proof}
  Fix an arbitrary open connected set $U\subseteq X$, a point $x_0\in\overline U$ and an open neighborhood $V$ of $f(x_0)$ in $Y$. Denote $C=U\cup\{x_0\}$. Then the inclusions $U\subseteq C\subseteq\overline{U}$ imply that $f(C)$ is a connected set. Assume $f(U)\cap V=\emptyset$. Then
$$
f(C)=f(U\cup\{x_0\})=f(U)\cup \{f(x_0)\}\subseteq (Y\setminus V)\cup \{f(x_0)\},
$$
which contradicts to the connectedness of $f(C)$.

Now let $X$ be a locally convex space. Fix a set $G\in\mathcal W(X)$, a point $x_0\in \overline G$, an open convex neighborhood $W$ of $x_0$ in $X$ and an open neighborhood $V$ of $f(x_0)$ in $Y$. Denote $U=W\cap G$. Clearly, $U\in\mathcal G(X)$. The rest of the proof runs as before.
\end{proof}

The converse proposition is true for $F_\sigma$-measurable mappings defined on a locally connected hereditarily Baire space.

\begin{theorem}\label{th:wGconn}
Let $X$ be a locally connected hereditarily Baire space, $Y$ a topological space and let $f:X\to Y$ be a weakly Gibson $F_\sigma$-measurable mapping.
Then for any  connected set $C\subseteq X$ with the dense connected interior the set $f(C)$ is connected.
 \end{theorem}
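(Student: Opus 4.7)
My plan is to reduce first to showing that $f(U)$ is connected, where $U := \operatorname{int} C$. Since $U$ is open and connected and dense in $C$, one has $U \subseteq C \subseteq \overline{U}$, and the weak Gibson property gives $f(U) \subseteq f(C) \subseteq f(\overline U) \subseteq \overline{f(U)}$; a set sandwiched between a connected set and its closure is connected, so the connectedness of $f(C)$ follows once we know $f(U)$ is connected.

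To prove $f(U)$ is connected I argue by contradiction. A disconnection $f(U) = A_1 \sqcup A_2$ with $A_i$ nonempty and clopen in $f(U)$ can be realised by open sets $V_1, V_2 \subseteq Y$ with $V_i \cap f(U) = A_i$. Setting $X_i := U \cap f^{-1}(V_i)$, the disjointness of the $A_i$ inside $f(U)$ yields $X_1 \sqcup X_2 = U$ with both $X_i$ nonempty. Since $f$ is $F_\sigma$-measurable each $X_i$ is $F_\sigma$ in $U$, and the relation $X_i = U \setminus X_{3-i}$ then makes each ambiguous. This puts me in a position to apply Theorem~\ref{th:main}(1) to the space $U$, provided (a) $U$ is connected, locally connected, and hereditarily Baire, and (b) each $X_i$ is $\mathcal{G}$-closed in $U$.

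The heart of the argument, and in my view the main obstacle, is (b). For $W \in \mathcal{G}(U)$ with $W \subseteq X_i$, the weak Gibson property only gives $f(\overline{W}) \subseteq \overline{f(W)} \subseteq \overline{V_i}$, which is a priori too weak because $\overline{V_i}$ may meet $V_{3-i}$. The trick is to exploit that $A_i$ is also \emph{closed} in $f(U)$, so $\overline{A_i} \cap f(U) = A_i$. Since $f(W) \subseteq A_i$, we have $f(\overline{W}) \subseteq \overline{A_i}$, and therefore for any $x \in \overline W \cap U$ we get $f(x) \in \overline{A_i} \cap f(U) = A_i \subseteq V_i$, so $x \in X_i$; as $U$ is open, $\overline W \cap U$ is precisely the closure of $W$ in $U$, which establishes $\mathcal{G}$-closedness.

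For (a), connectedness is given and local connectedness is inherited from $X$. For hereditary Baire-ness, any $F$ closed in $U$ satisfies $F = \overline F \cap U$ (closure in $X$) and so is an open subspace of the closed, hence Baire, set $\overline F$; and open subspaces of Baire spaces are Baire (a standard Banach-category argument). Theorem~\ref{th:main}(1) then forces $X_1 = U$ or $X_2 = U$, contradicting the nonemptiness of both, and the proof is complete.
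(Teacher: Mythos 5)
Your argument is correct and follows essentially the same route as the paper: reduce to showing $f(U)$ is connected, decompose $U$ into two nonempty disjoint ambiguous $\mathcal G$-closed sets from a hypothetical disconnection, apply Theorem~\ref{th:main}(1) to $U$, and finish with the sandwich $f(U)\subseteq f(C)\subseteq\overline{f(U)}$. The only difference is that you spell out explicitly (and correctly) the verifications the paper leaves implicit, namely the $\mathcal G$-closedness of the pieces and the fact that the open subspace $U$ is hereditarily Baire.
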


\begin{proof}
Let $C\in\mathcal C(X)$, $U={\rm int\,}C$ and $C\subseteq \overline{U}$.

We first prove that $f(U)$ is a connected set. Suppose, contrary to our claim, that $f(U)= W_1\cup W_2$, where $W_1$ and $W_2$ are non-empty disjoint open subsets of  $f(U)$. Set $g=f|_U$. Evidently, $g:U\to f(U)$ is a weakly Gibson $F_\sigma$-measurable mapping. Let $A_i=g^{-1}(W_i)$ for $i=1,2$. Then every set $A_i$ is $\mathcal G$-closed in $U$, provided $g$ is weakly Gibson. Moreover, every $A_i$ is ambiguous set in $U$, $U=A_1\cup A_2$ and \mbox{$A_1\cap A_2=\emptyset$}. Taking into account that $U$ is a hereditarily Baire connected and locally connected space, we obtain that $A_1=U$ or $A_2=U$ according to Theorem~\ref{th:main}(1).
Then $W_1=\emptyset$ or $W_2=\emptyset$, a contradiction. Therefore, $f(U)$ is a connected set.

Since $f$ is weakly Gibson, $f(U)\subseteq f(C)\subseteq f(\overline{U})\subseteq \overline{f(U)}$. Consequently,  the set $f(C)$ is connected.
\end{proof}

For a mapping $f:X\to Y$ we define $\gamma_f:X\to X\times Y$,
$$
\gamma_f(x)=(x,f(x)).
$$
Remark that if $X$ is a connected and locally connected hereditarily Baire space and $\gamma_f$ is an $F_\sigma$-measurable weakly Gibson mapping then Theorem~\ref{th:wGconn} implies that $f$ has a connected graph $\Gamma$, provided $\Gamma=\gamma_f(X)$. It is not hard to prove that $\gamma_f$ remains to be weakly Gibson for any weakly Gibson mapping $f:\mathbb R\to\mathbb R$. But Example~\ref{ex:notWeaklyGibson} shows that $\gamma_f$ need not be weakly Gibson for a weakly Gibson $F_\sigma$-measurable mapping $f:\mathbb R^2\to\mathbb R$.

\begin{theorem}\label{th:MainMain}
  Let $X=\mathbb R^n$ with  $n\ge 1$ and let $Y$ be a $T_1$-space. If $f:X\to Y$ is a weakly Gibson $F_\sigma$-measurable mapping then $f$ has a connected graph.
\end{theorem}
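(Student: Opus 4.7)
The plan is to argue by contradiction and reduce to Theorem~\ref{th:main}(2). Assume the graph $\Gamma=\gamma_f(X)$ is disconnected, and choose open sets $G_1,G_2\subseteq X\times Y$ with $\Gamma\subseteq G_1\cup G_2$, $\Gamma_i:=\Gamma\cap G_i\ne\emptyset$, and $\Gamma_1\cap\Gamma_2=\emptyset$. Set $X_i=\pi_X(\Gamma_i)=\gamma_f^{-1}(G_i)$. Since $f$ is a function, $X_1$ and $X_2$ are disjoint and cover $X$; my task is to verify that each $X_i$ is ambiguous and $\mathcal W$-closed, so that Theorem~\ref{th:main}(2) forces one of them to equal $X$, contradicting the non-emptiness of the other.

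For the ambiguity, I would exploit the fact that $X=\mathbb R^n$ is second countable (even though $Y$ is merely $T_1$). Fixing a countable base $\{U_k\}$ of $X$, define $V_{i,k}=\bigcup\{V\subseteq Y\text{ open}:U_k\times V\subseteq G_i\}$. A short verification shows
$$
X_i=\bigcup_{k=1}^{\infty}\bigl(U_k\cap f^{-1}(V_{i,k})\bigr),
$$
which is a countable union of intersections of open sets with $F_\sigma$-sets, hence an $F_\sigma$-set in the metrizable space $\mathbb R^n$. Since $X_1$ and $X_2$ partition $X$, each is also $G_\delta$, so both are ambiguous.

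For the $\mathcal W$-closedness of $X_i$, I would take $W\in\mathcal W(X)$ with $W\subseteq X_i$ and a point $x_0\in\overline W$, and suppose for a contradiction that $x_0\in X_j$ with $j\ne i$. Then $(x_0,f(x_0))\in G_j$, so there is a basic neighbourhood $U'\times V'\subseteq G_j$ with $U'=B(x_0,r)$ an open ball and $V'$ an open neighbourhood of $f(x_0)$ in $Y$. The set $W\cap U'$ is open and convex, hence connected and in $\mathcal G(X)$; moreover $x_0\in\overline{W\cap U'}$ because $U'$ is a neighbourhood of $x_0\in\overline W$. Applying the weak Gibson property to $W\cap U'$ yields $f(x_0)\in\overline{f(W\cap U')}$, so some $x\in W\cap U'$ satisfies $f(x)\in V'$. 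But then $(x,f(x))\in U'\times V'\subseteq G_j$ while $x\in W\subseteq X_i$ forces $(x,f(x))\in G_i$, placing this point in $\Gamma_i\cap\Gamma_j=\emptyset$, a contradiction.

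Once ambiguity and $\mathcal W$-closedness are established, Theorem~\ref{th:main}(2) immediately delivers $X_1=X$ or $X_2=X$, contradicting the non-emptiness of both pieces. The step I expect to be the main obstacle is verifying that $X_i$ is $F_\sigma$: the naive decomposition of $G_i$ into basic open boxes $U_\alpha\times V_\alpha$ is uncountable because $Y$ need not be second countable. The trick of indexing only by a countable base of $X$ and absorbing the $Y$-factor into a single set $V_{i,k}$ per base element is what makes the argument go through with only the $T_1$ hypothesis on $Y$.
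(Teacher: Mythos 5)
Your proof is correct, and its skeleton is the paper's: assume the graph is disconnected, pass to $X_i=\gamma_f^{-1}(G_i)$, prove these are ambiguous and $\mathcal W$-closed, and invoke Theorem~\ref{th:main}(2); your ambiguity argument via $V_{i,k}=\bigcup\{V:\ U_k\times V\subseteq G_i\}$ is exactly the device the paper uses to show $\gamma_f$ is $F_\sigma$-measurable. Where you genuinely diverge is the $\mathcal W$-closedness step. The paper reaches it by a detour: it first applies Theorem~\ref{th:wGconn} (which consumes the $F_\sigma$-measurability, the hereditary Baire property and Theorem~\ref{th:main}(1)) to get connectedness of $f(C)$ for $U\subseteq C\subseteq\overline U$, then feeds that into Theorem~\ref{th:strong} (where the $T_1$ hypothesis on $Y$ is used) to conclude that $f$, and hence $\gamma_f$, is strongly $\mathcal W$-Gibson. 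You instead verify the needed instance directly from the weak Gibson property of $f$: shrinking a product neighbourhood of $(x_0,f(x_0))$ inside $G_j$ to a ball $U'$, observing that $W\cap U'$ is a nonempty open convex --- hence connected --- set with $x_0$ in its closure, and applying $f(\overline{W\cap U'})\subseteq\overline{f(W\cap U')}$. This is a real simplification: it shows that the Baire-category machinery is needed only through Theorem~\ref{th:main}(2) and that the $T_1$ assumption on $Y$ is never actually used in this theorem, whereas the paper's route makes both appear essential. The only points worth stating explicitly in a final write-up are that $W\cap U'\ne\emptyset$ because $U'$ is an open neighbourhood of a closure point of $W$, and that nonempty open convex subsets of $\mathbb R^n$ are connected; you address both.
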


\begin{proof}
  We first observe that by Theorem~\ref{th:wGconn} for any $U\in\mathcal G(X)$ and for any $C$ with $U\subseteq C\subseteq \overline {U}$ the set $f(C)$ is connected. Then $f$ has the strong Gibson property with respect to the system $\mathcal W(X)$ according to Theorem~\ref{th:strong}. It is easy to see that $\gamma_f$ is also $\mathcal W$-strongly Gibson.

     We show that $\gamma_f:X\to X\times Y$ is $F_\sigma$-measurable. Let $\{B_k:k\in\mathbb N\}$ be a base of open sets in $X$ and $W$ be an arbitrary open set in  $X\times Y$. Put $$V_k=\bigcup\{V: V \mbox{ is open in  } Y \mbox{ and } B_k\times V\subseteq W\}.$$ Then  $W=\bigcup\limits_{k=1}^\infty (B_k\times V_k)$. Since $\gamma_f^{-1}(W)=\bigcup\limits_{k=1}^\infty (B_k\cap f^{-1}(V_k))$, $\gamma_f^{-1}(W)$ is an $F_\sigma$-subset of $X$.

 Now assume that $Y_0=\gamma_f(X)$ is not connected and choose open disjoint non-empty subsets $W_1$ and $W_2$ of $Y_0$ such that $Y_0=W_1\cup W_2$. Let $X_i=\gamma_f^{-1}(W_i)$ for $i=1,2$. It is easy to check that $X_1$ and $X_2$ are $\mathcal W$-closed ambiguous subsets of  $X$. Moreover, $X_1\cap X_2=\emptyset$ and $X=X_1\cup X_2$. Then $X_1=X$ or $X_2=X$ by Theorem~\ref{th:main}~(2). Consequently, $W_1=\emptyset$ or $W_2=\emptyset$, a contradiction.
\end{proof}

The following question is open.

\begin{question}
  Let $X$ be a normed space, $Y$ a $T_1$-space and let $f:X\to Y$ be a weakly Gibson $F_\sigma$-measurable mapping. Is the graph of $f$ a connected set?
\end{question}

\section{Examples}

Our first example shows that the class of all $F_\sigma$-measurable Darboux mappings is strictly wider than the class of all Baire-one Darboux mappings.

\begin{example}
There exist a connected subset $Y\subseteq\mathbb R^2$ and an $F_\sigma$-measurable Darboux function $f:\mathbb R\to Y$ which is not a Baire-one function.
\end{example}

\begin{proof}
  Let $\mathbb Q=\{r_n:n\in\mathbb N\}$ be the set of all rational numbers. For every $n\in\mathbb N$ we consider the function $\varphi_n:\mathbb R\to\mathbb R$,
  $$
  \varphi_n(x)=\left\{\begin{array}{ll}
                        \sin\frac{1}{x-r_n}, & x\ne r_n, \\
                        0, & x=r_n.
                      \end{array}
  \right.
  $$
Define the function $g:\mathbb R\to\mathbb R$,
  $$
  g(x)=\sum\limits_{n=1}^\infty \frac{1}{2^n}\varphi_n(x).
  $$
Let
$$
  Y=\{(x,y)\in\mathbb R^2:y=g(x)\}\quad\mbox{and}\quad f=\gamma_g.
$$
   Observe that for every  $n$ the function $g_n(x)=\sum\limits_{k=1}^n \frac{1}{2^k}\varphi_k(x)$ is a Baire-one Darboux function. Since the sequence $(g_n)_{n=1}^\infty$ is uniformly convergent to $g$ on $\mathbb R$, $g$ is a Baire-one Darboux function~\cite[Theorem 3.4]{Bru}.  Consequently, the graph of $g|_C$ is connected for every connected subset $C\subseteq\mathbb R$ according to \cite[Theorem 1.1]{Bru}. Therefore, $f:\mathbb R\to Y$ is a Darboux function. Moreover, $f:\mathbb R\to \mathbb R^2$ is a Baire-one mapping, which implies that $f:\mathbb R\to Y$ is $F_\sigma$-measurable.

  Note that the space $Y$ is punctiform (i.e., $Y$ does not contain any continuum of cardinality larger than one), since $g$ is discontinuous on everywhere dense set $\mathbb Q$ (see \cite{KuSerp}). Then each continuous mapping between $\mathbb R$ and $Y$ is constant. Therefore, $f:\mathbb R\to Y$ is not a Baire-one mapping.
\end{proof}

\begin{example}\label{ex:notWeaklyGibson}
  For all $(x,y)\in\mathbb R^2$ define
  $$
   f(x,y)=\left\{\begin{array}{ll}
                   \sin\frac 1x, & x>0,\\
                   1, & x\le 0.
                 \end{array}
   \right.
  $$
Then $f:\mathbb R^2\to \mathbb R$ is an $F_\sigma$-measurable weakly Gibson function, but $\gamma_f$  is not weakly Gibson.
\end{example}

\begin{proof}
  Show that $f$ is weakly Gibson. It is sufficient to check that $f$ is weakly Gibson at each point of the set $\{0\}\times \mathbb R$. Fix $y_0\in\mathbb R$ and an open connected set $U\subseteq \mathbb R^2$ such that $p_0=(0,y_0)\in\overline{U}\setminus U$. Take an arbitrary neighborhood $V$ of $f(p_0)$ in $\mathbb R$. Clearly, $f(p)\in V$ for all $p\in U\cap ((-\infty,0]\times \mathbb R)$. Consider the case $U\subseteq (0,+\infty)\times\mathbb R$. Since $p_0\in\overline{U}$ and $U$ is connected, there exists $n\in\mathbb N$ such that $U\cap(\{\frac{1}{\pi/2+2\pi n}\}\times\mathbb R)\ne\emptyset$. Let $y\in\mathbb R$ with $p=(\frac{1}{\pi/2+2\pi n},y)\in U$. Then $f(p)=1$ and  $f(p)\in V$. Hence, $f$ is weakly Gibson.

   Consider open connected set $U=\{(x,y)\in\mathbb R^2: x>0\,\,\mbox{and}\,\, |y-\sin\frac 1x|<x\}$ and let $C=U\cup\{(0,0)\}$. Then $U\subseteq C\subseteq\overline{U}$. Note that $\gamma_f:\mathbb R^2\to\mathbb R^3$ is $F_\sigma$-measurable. One easily checks that  $\gamma_f(C)$ is not connected. Therefore, $\gamma_f$ is not weakly Gibson by Theorem~\ref{th:wGconn}.
\end{proof}

Finally, we give an example of a space $Y$ and an $F_\sigma$-measurable Darboux mapping $f:\mathbb R\to Y$ which is not barely continuous.

We need first some definitions and auxiliary facts. For a topological space $Y$ by ${\mathcal F}(Y)$ we denote the space of all non-empty closed subsets of $Y$ equipped with the Vietoris topology.
A multivalued mapping $F:X\to Y$ is said to be {\it upper (lower) continuous at $x_0\in X$} if for any open set  $V$ in $Y$ such that $F(x_0)\subseteq V$ ($F(x_0)\cap V\ne\emptyset$) there exists a neighborhood $U$ of $x_0$ in $X$ such that for every  $x\in U$ we have $F(x)\subseteq V$ ($F(x)\cap V\ne\emptyset$). A multivalued mapping $f$ which is upper and lower continuous at $x_0$ is called {\it continuous at $x_0$}.

\begin{lemma}\label{pr:1.2}
 There exists a continuous mapping $f_0:\mathbb R\to {\mathcal F}(\mathbb R)$ such that
for all $x\in [0,1]$ and $p\in P=\{\frac{1}{n}:n\in\mathbb N\}\cup \{0\}$  there are $n_p\in\mathbb N$,  strictly increasing unbounded sequence  $(v_n)_{n\geq n_p}$ of reals  $v_n>0$ and strictly decreasing unbounded sequence  $(u_n)_{n\geq n_p}$  of reals  $u_n<0$  such that   $$f_0(u_n)=f_0(v_n)=\{p\}\cup\bigcup\limits_{k=1}^{n}[k,k+x]\cup\bigcup\limits_{k>n}\{k\}$$ for all $n\geq n_p$.
\end{lemma}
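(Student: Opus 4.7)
My plan is to construct $f_0$ on $\mathbb{R}$ as a concatenation of \emph{scan blocks}, on which $f_0$ continuously traces the path $x\mapsto K(p,n,x):=\{p\}\cup\bigcup_{k=1}^{n}[k,k+x]\cup\bigcup_{k>n}\{k\}$ in $\mathcal{F}(\mathbb{R})$ for a fixed pair $(p,n)$, separated by \emph{transition segments} on which $f_0$ slides a single ``extra point'' continuously through $(0,1)\setminus\mathbb{N}$ in order to pass from one value $\{p\}\cup\mathbb{N}$ to another $\{p'\}\cup\mathbb{N}$. The key algebraic observation is that $K(p,n,0)=\{p\}\cup\mathbb{N}$ for every $n$, so a scan block traversed forward (from $x=0$ to $x=1$) and then back (to $x=0$) begins and ends at $\{p\}\cup\mathbb{N}$, meshing cleanly with the adjacent transition segments.

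First I would verify Vietoris-continuity of the two building blocks. For fixed $p\in P$ and $n\in\mathbb{N}$, the map $x\mapsto K(p,n,x)$ is Vietoris-continuous on $[0,1]$: upper semicontinuity holds because the varying part $\bigcup_{k=1}^{n}[k,k+x]$ lies inside the compact set $[1,n+1]$, so any open $V\supseteq K(p,n,x_0)$ admits a uniform thickening that still contains $K(p,n,x)$ for $x$ close to $x_0$; lower semicontinuity is a short case analysis on a witness $y\in V\cap K(p,n,x_0)$, the subtle case $y=k+x_0$ being handled by noting that $V$ contains a two-sided neighborhood of $y$. Similarly, $q\mapsto\{q\}\cup\mathbb{N}$ is Vietoris-continuous in $q\in[0,1]\setminus\mathbb{N}$, which justifies using the transition segments.

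For the global assembly, enumerate $P=\{p_1,p_2,\dots\}$ and use the bijection $\mathbb{N}\leftrightarrow\mathbb{N}\times\mathbb{N}$ given by $k=2^{i-1}(2m-1)\leftrightarrow(i(k),m(k))$. Partition $(0,\infty)$ into consecutive closed intervals $J_1,T_1,J_2,T_2,\dots$ whose right endpoints tend to $\infty$; on $J_k$ parameterize $f_0$ as the forward-then-reverse trace of $x\mapsto K(p_{i(k)},m(k),x)$, and on $T_k$ let $f_0$ take the value $\{q\}\cup\mathbb{N}$ with $q$ sliding continuously through $(0,1)\setminus\mathbb{N}$ from $p_{i(k)}$ to $p_{i(k+1)}$. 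Mirror the construction on $(-\infty,0)$ and set $f_0\equiv\{p_1\}\cup\mathbb{N}$ on a small symmetric neighborhood of $0$ to match the boundary values of the first positive and first negative blocks. Setting $n_p:=1$ for every $p\in P$: for each $p=p_i$ and each $n\ge n_{p_i}$ there is a unique $k$ with $(i(k),m(k))=(i,n)$, and the forward half of $J_k$ hits each $K(p,n,x)$ for $x\in[0,1]$ at exactly one point, which we declare to be $v_n$. The indices $k$ with $i(k)=i$ are $2^{i-1},3\cdot 2^{i-1},5\cdot 2^{i-1},\dots$, giving $m(k)=1,2,3,\dots$ in that order, so $(v_n)_{n\ge 1}$ is strictly increasing and unbounded; the argument for $(u_n)$ on the negative side is symmetric.

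The main technical point is verifying Vietoris continuity at the infinitely many junctions where a scan block meets a transition segment. This reduces to checking that both sides of each junction converge in Vietoris topology to the common value $\{p_{i(k)}\}\cup\mathbb{N}$, which follows from the two continuity statements of the second paragraph combined with the convention of keeping the sliding point strictly inside $(0,1)\setminus\mathbb{N}$ throughout every transition.
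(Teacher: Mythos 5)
Your construction is correct and proves the lemma, but it is packaged quite differently from the paper's. The paper defines $f_0$ by a single global formula, $f_0(x)=\{\varphi_0(x)\}\cup\bigcup_{n=1}^\infty[n,\varphi_n(x)]$, where on $|x|\in(k,k+1]$ every $\varphi_j$ with $j\le k$ oscillates synchronously as $j+\sin(4\pi k|x|)$, while $\varphi_0$ is a continuous function that is constantly $p_j$ on a plateau $|x|\in[k+\tfrac{2j-1}{2k},k+\tfrac{j}{k}]$ whose length is exactly one period of $\sin(4\pi k\,\cdot)$; the witness $v_k$ is a solution of $\sin(4\pi k v_k)=x$ inside the plateau at height $p$, and $u_k=-v_k$. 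Thus the paper realizes, within one unit interval $(k,k+1]$, all the sets $K(p_j,k,x)$ with $j\le k$ simultaneously and obtains $n_p=j$ for $p=p_j$, whereas you realize one pair $(p,n)$ per block via the pairing $k=2^{i-1}(2m-1)$ and obtain $n_p=1$; both satisfy the statement. Your decoupling into scan blocks (only the interval lengths move, label fixed) and transition segments (only the isolated label point moves, intervals degenerate) makes the Vietoris-continuity verification modular, and it quietly sidesteps the one delicate spot in the paper's formula: $\sin(4\pi k|x|)$ takes negative values, where $[n,\varphi_n(x)]$ would degenerate and the points $1,\dots,k$ would momentarily drop out of $f_0(x)$, so the paper's $\varphi_n$ must be read with the sine replaced by something nonnegative for lower semicontinuity to hold. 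Both arguments ultimately rest on the two facts you isolate explicitly: $x\mapsto K(p,n,x)$ is Vietoris-continuous, and $K(p,n,0)=\{p\}\cup\mathbb N$ is independent of $n$, which is what allows the scans to be chained while $p$ and $n$ vary. (One cosmetic remark: your insistence on keeping the sliding label in $(0,1)\setminus\mathbb N$ is unnecessary, since $q\mapsto\{q\}\cup\mathbb N$ is Vietoris-continuous on all of $[0,1]$, including at $q=1$ where the label is absorbed into $\mathbb N$.)
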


\begin{proof}
Let $P=\{p_n:n\in\mathbb N\}$. Choose a continuous function $\varphi_0:\mathbb R\to [0,1]$ with $\varphi_0(x)=p_k$ if $|x|\in[n+\frac{2k-1}{2n}, n+\frac{k}{n}]$, where $n\in\mathbb N$ and $k=1,\dots,n$. For every $n\in\mathbb N$ define a continuous function $\varphi_n:\mathbb R\to [n,n+1]$,
$$\varphi_n(x)=\left\{\begin{array}{ll}
                         n, & |x|\leq n,\\
                         n+\sin(4\pi k|x|), & |x|\in (k,k+1], k\geq n.
                       \end{array}
 \right.$$
Let $f_0(x)=\{\varphi_0(x)\}\cup\bigcup\limits _{n=1}^{\infty}[n,\varphi_n(x)]$. Since all the functions $\varphi_n$ are continuous and $\varphi_k(x)=k$ for $x\in [-n,n]$ and $k\geq n$, $f_0$ is continuous.

Fix $p=p_n\in P$ and $x\in[0,1]$. Denote $n_p=n$. For all $k\geq n$ choose $v_k\in [k+\frac{2n-1}{2k}, k+\frac{n}{k}]$ such that $\sin(4\pi k v_k)=x$. Then for every  $k\geq n$ we have $\varphi_0(v_k)=p$, $\varphi_1(v_k)=1+x$,\dots, $\varphi_k(v_k)=k+x$  and $\varphi_i(v_k)=i$ for $i>k$, i.e., the sequence $(v_k)_{k\geq n}$ satisfies the condition of the lemma. It remains to set $u_k=-v_k$ for all $k\in\mathbb N$.
\end{proof}

\begin{example}\label{multivalued}
 There exists a Baire-one $F_{\sigma}$-measurable Darboux mapping $f:\mathbb R\to {\mathcal F}(\mathbb R)$ such that the restriction $f|_C$ of $f$ on the Cantor set $C\subseteq \mathbb R$ is everywhere discontinuous and $f(\mathbb R)$ is hereditarily Lindel\"{o}f (in particular, $f(\mathbb R)$ is perfectly normal).
\end{example}

\begin{proof} Let $\mathbb R\setminus C=\bigcup\limits _{n=1}^{\infty}I_n$, where $I_n=(a_n,b_n)$. Set $A=\{a_n:n\in\mathbb N\}$ and $B=C\setminus A$. For every $n\in\mathbb N$ we choose a homeomorphism $\psi_n:I_n\to\mathbb R$. Define
$$f(x)=\left\{\begin{array}{lll}
                         f_0(\psi_n(x)), & x\in I_n,\\
                         \{\frac{1}{n}\}\cup\bigcup\limits_{k=1}^{\infty}[k,k+x], & n\in\mathbb N, x=a_n,\\
                         \{0\}\cup\bigcup\limits_{k=1}^{\infty}[k,k+x], & x\in B,
                       \end{array}
 \right.$$
 where $f_0$ is the function from Lemma~\ref{pr:1.2}.

Show that $f$ is a Baire-one mapping. For every $n\in\mathbb N$ applying Lemma~\ref{pr:1.2}, we find a number $m_n$, strictly increasing sequence $(v_k^{(n)})_{k\geq m_n}$ of $v_k^{(n)}\in(\frac{a_n+b_n}{2},b_n)$ and strictly decreasing sequence $(u_k^{(n)})_{k\geq m_n}$ of $u_k^{(n)}\in(\frac{a_n,a_n+b_n}{2})$ such that $$f_0(\psi_n(u_k^{(n)}))=\{\frac{1}{n}\}\cup\bigcup\limits_{i=1}^{k}[i,i+a_n]\cup\bigcup\limits_{i>k}\{i\}$$ and $$f_0(\psi_n(v_k^{(n)}))=\{0\}\cup\bigcup\limits_{i=1}^{k}[i,i+a_n]\cup\bigcup\limits_{i>k}\{i\}$$ for all $i\geq m_n$.

For every $n\in\mathbb N$ denote $M_n=\{k\leq n:m_k\leq n\}$. Clearly, $M_n\subseteq M_{n+1}$ for all $n$ and $\mathbb N=\bigcup\limits_{n=1}^{\infty}M_n$. Choose a sequence of continuous functions $g_n:\mathbb R\to [0,1]$ which is pointwise convergent to the function
$$g(x)=\left\{\begin{array}{ll}
                         0, & x\in \mathbb R\setminus A,\\
                         \frac{1}{n}, & n\in\mathbb N, x=a_n.
                       \end{array}
 \right.$$
Without loss of generality, we assume that $g_n(u_k^{(n)})=\frac{1}{n}$ and $g_n(v_k^{(n)})=0$ if $n\in M_k$. Now for every $k\in\mathbb N$ define
$$f_k(x)=\left\{\begin{array}{ll}
                         f_0(\psi_n(x)), & x\in [u_k^{(n)},v_k^{(n)}], \, n\in M_k,\\
                         \{g_k(x)\}\cup\bigcup\limits_{i=1}^{k}[i,i+x]\cup\bigcup\limits_{i>k}\{i\}, & x\in \mathbb R \setminus\left(\bigcup\limits_{n\in M_k}[u_k^{(n)},v_k^{(n)}]\right).
                       \end{array}
 \right.$$
It is easy to see that each $f_k$ is continuous and $\lim\limits_{k\to\infty}f_k(x)=f(x)$ for all $x\in\mathbb R$.

We now prove that $f$ has the Darboux property. Let $I\subseteq \mathbb R$ be a connected set of cardinality larger than one. If $I\subseteq I_n$ for some $n\in\mathbb N$ then $f(I)$ is connected, provided the restriction $f|_{I_n}$ is continuous. Suppose $I\not\subseteq I_n$ for every $n\in\mathbb N$. Let $M=\{n\in\mathbb N: J_n=I_n\cap I\ne \emptyset\}$. Note that the set $G=\bigcup\limits _{n\in M}J_n$ is dense in $I$. Set $f(I)=U\cup V$, where $U$ and $V$ are disjoint clopen sets in $f(\mathbb R)$. Denote $K=\{n\in M:f(J_n)\subseteq U\}$ and $L=\{n\in M:f(J_n)\subseteq V\}$. Since the restriction of $f$ on each set $J_n$ is continuous, $G=G_1\cup G_2$, where $G_1=\bigcup\limits _{n\in K}J_n$, $G_2=\bigcup\limits _{n\in L}J_n$ and $G_1\cap G_2=\emptyset$. Lemma~\ref{pr:1.2} implies that $f(\overline{G_i})\subseteq \overline{f(G_i)}$ for $i=1,2$. Hence, $f(\overline{G_1})\subseteq U$ and $f(\overline{G_2})\subseteq V$. Therefore, $I=\overline{G_1}\cup \overline{G_2}$ and $\overline{G_1}\cap\overline{G_2}=\emptyset$. Consequently, $G_1=\emptyset$ or $G_2=\emptyset$. Thus, $U=\emptyset$ or $V=\emptyset$.

To show that $Y=f(\mathbb R)$ is hereditarily Lindel\"{o}f it is sufficient to prove that $Y_1=f(\mathbb R\setminus C)$ and $Y_2=f(C)$ are hereditarily Lindel\"{o}f. Note that $Y_1=f_0(\mathbb R)$ is hereditarily Lindel\"{o}f, since $Y_1$ is a continuous image of $\mathbb R$ under the continuous mapping $f_0$ with values in Hausdorff space  ${\mathcal F}(\mathbb R)$. Since $f(a_n)\cap [0,1]=\{\frac{1}{n}\}$ for every $n\in\mathbb N$ and $f(b)\cap [0,1]=\{0\}$ for each $b\in B$, the space $f(A)$ is countable discrete subspace of $Y_2$. Moreover, for each $b\in B$ the sets $f((b-\varepsilon,b]\cap C)$, where $\varepsilon>0$, form a base of neighborhoods of $f(x_0)$ in $Y_2$. Since an arbitrary union of sets of the form  $(u,v]$ is a union of a sequence $(u_n,v_n]$, $Y_2$ is hereditarily Lindel\"{o}f. Hence, $X$ is hereditarily Lindel\"{o}f, consequently, $X$ is perfectly normal.

Since $Y$ is perfectly normal and $f$ is a Baire-one function, $f$ is $F_\sigma$-measurable \cite[p.~394]{Ku1}. It remains to prove that the restriction $f|_C$ of $f$ on the Cantor set $C$ is everywhere discontinuous. Note that $f|_C$ is discontinuous everywhere on $A$, since $f(A)$ is discrete in $Y_2$. Moreover, for every $b\in B$ sets of the form $(b-\varepsilon,b]\cap C$ is not a neighborhood of $b$  in $C$. Therefore, $f|_C$ is discontinuous at each point $b\in B$.
\end{proof}


\begin{thebibliography}{27}
 \normalsize
\baselineskip=17pt


\bibitem{Bru} A.~Bruckner, {\it Differentiation of Real Functions} [2nd
ed.], Providence, RI: American Mathematical Society, 1994,
195~p.

\bibitem{Eng} R.~Engelking, \emph{General Topology}. Revised and completed edition. Heldermann Verlag, Berlin (1989).

\bibitem{EvansHumke} M. J. Evans and P. D. Humke, {\it Baire one, Gibson and weakly Gibson real
functions of several real variables}, Rend. Circ. Mat. Palermo, Ser. II, 59
(2010), 47--51.

\bibitem{GN} R.~Gibson and T.~Natkaniec, \textit{Darboux-like functions}, Real Anal. Exchange {22}(2) (1996-97), 492--533.

\bibitem{KaMykh} O. Karlova and V. Mykhaylyuk, {\it On Gibson functions with connected graphs}, Math. Slovaca {63} (3)  (2013).

\bibitem{Kellum} K. Kellum, {\it Functions that separate $X\times\mathbb R$}, Houston J. Math. {36} (2010), 1221--1226.

\bibitem{Ku1} K.~Kuratowski, \textit{Topology}, Volume 1, Moscow  (1966) (in Russian).

\bibitem{KuSerp} C. Kuratowski and W. Sierpinski, {\it Les fonctions de classe  et les ensembles connexes ponctiformes}, Fund. Math. {3} (1922) 303--313.

\end{thebibliography}
\end{document}